\title{Knocking Down Boxes: \\The FMP for $\mathbf{K}\oplus \Box^{m+k} p\to\Box^m p$
} 
\author{Søren Brinck Knudstorp \\ 
    \small ILLC and Philosophy, University of Amsterdam \\
    \small Amsterdam, The Netherlands \\
    \small \texttt{s.b.knudstorp@uva.nl} \\ 
}
\begin{document}

\setheaderinfo{Knocking Down Boxes}{S. B. Knudstorp} 

\maketitle 
\thispagestyle{ACtitlepageFH} 
\startmainpages 

\begin{abstract}
    It is a long-standing open problem whether modal logics of the form $\mathbf{K}\oplus \Box^np\to\Box^mp$ for $n>m>1$ have the finite model property (FMP). We solve this by showing that any modal logic axiomatized by formulas of the form $\Box^np\to\Box^mp$ where $n>m>1$ has the FMP.
\\\\
    \noindent\textbf{Keywords} \; Modal logic $\cdot$ Finite model property $\cdot$ Modal reduction principles $\cdot$ $n$-density $\cdot$ Decidability
\end{abstract}
\section{Introduction}

Among the more basic results in modal logic are that the logics $\mathbf{K}\oplus\Box p\to p$ and $\mathbf{K}\oplus\Box\Box p\to \Box p$ have the finite model property (FMP), and likewise for the logics axiomatized by the converse axioms, $\mathbf{K}\oplus p\to \Box p$ and $\mathbf{K}\oplus\Box p\to \Box\Box p$. Yet for the innocuous-looking analogues 
\[
    \mathbf{K}\oplus\Box\Box\Box p\to \Box\Box p\qquad \text{and}\qquad \mathbf{K}\oplus\Box\Box p\to \Box\Box\Box p,
\]
it is unknown whether they too enjoy the FMP. In fact, they are the simplest instances of the following long-open problem:
\begin{align}
    \text{Do the logics of the form }\mathbf{K}\oplus \Box^np\to\Box^mp,\text{for $n,m>1$, have the FMP?}\tag{P}\label{problem:fmp}    
\end{align}
This question appears in  Problem 11.2 in \textcite{ChagrovZ97:ml}, who attribute it to Segerberg (see their 11.8 Notes). Earlier, \textcite{Gabbary72:JPL} had established the FMP in the cases where $n=1$ or $m=1$, and around the same time, \textcite{Zakharyaschev1997} proved that all extensions of $\mathbf{K4}$ by \textit{modal reduction principles}---formulas of the form $\mathsf{M}p\to\mathsf{N}p$ for $\mathsf{M,N}$ sequences of $\Diamond$s and $\Box$s---have the FMP. \textcite{Zakharyaschev1997} also remarked that ``[...] the situation with FMP of extensions of $\mathbf{K}$ by modal reduction principles, even by axioms of the form $\Box^np \to \Box^mp$ still remains unclear. I think at present this is one of the major challenges in completeness theory''. A decade later, \textcite{WolterZakharyaschev2007} echoed this, reiterating the question~\eqref{problem:fmp} as Problem 6 and calling it ``[p]erhaps one of the most intriguing open problems in Modal Logic''.

Only recently has there been partial breakthrough. \textcite{LyonOstropolski-Nalewaja24:lics} used techniques from database theory, in particular (disjunctive) existential rules and (disjunctive) chase, to prove that if $\Phi$ is a finite set of formulas of the form $\Box^np\to \Box^mp$ where $n>m>1$, then $\mathbf{K}\oplus \Phi$ is decidable.\footnote{Their proof contains some errors, but in personal communication, Piotr Ostropolski-Nalewaja has let me know that they are aware of this, believe them to be non-critical, and are working on a corrected extended version.}

Motivated by this, we solve half of problem~\eqref{problem:fmp}, showing that if $\Phi$ is a set of formulas of the form $\Box^np\to \Box^mp$ where $n>m>1$, then $\mathbf{K}\oplus \Phi$ has the FMP. As a corollary we recover the result of \textcite{LyonOstropolski-Nalewaja24:lics}: if $\Phi$ is finite, then $\mathbf{K}\oplus \Phi$ is decidable. In contrast to their work, we prove the FMP and employ a different approach that combines a representation lemma with a filtration lemma.

We proceed as follows. Section~\ref{sec:Setup} sets out the preliminaries, the proof strategy, and the main results. Section~\ref{sec:RepandFil} contains the proofs: Subsection~\ref{subsec:Rep} proves a representation lemma, and Subsection~\ref{subsec:Fil} a filtration lemma; together, these yield the finite model property.

\section{Setup and Results}\label{sec:Setup}
We begin with some foundational definitions; for further background, readers may consult standard textbooks on modal logic, e.g., \textcite{bluebook}. 

Our \textit{language} is the basic modal language, generated by the grammar 
\[
    \varphi\mathrel{::=} \bot\hspace{0.1cm}|\hspace{0.1cm}p \hspace{0.1cm}| \hspace{0.1cm} \neg\varphi\hspace{0.1cm}|\hspace{0.1cm}\varphi\lor\varphi\hspace{0.1cm}|\hspace{0.1cm}\Diamond\varphi,
\]
where $p\in \mathsf{Prop}$ are propositional variables and $\bot$ is the falsum constant. `$\land$', `$\to$', and `$\Box$' are defined in the usual way. 

\textit{Frames} are pairs $(W, R)$ where $R\subseteq W\times W$; \textit{pointed frames} $(W, R,r)$ are frames $(W, R)$ with $r\in W$; and \textit{models} $(W, R, V)$ are frames $(W, R)$ with a \textit{valuation} $V:\mathsf{Prop}\to \mathcal{P}(W)$. Given a frame $(W, R)$, the $k$-step relations $R^k\subseteq W\times W$ are defined recursively by 
\[
    R^0\mathrel{:=}\Delta=\{(w,w)\mid w\in W\},\qquad R^{k+1}\mathrel{:=}R^k\circ R,
\]
and we write $R^k[w]\mathrel{:=}\{v\in W\mid wR^kv\}$ for the set of $k$-step successors of $w\in W$. A pointed frame $(W, R,r)$ is \textit{rooted} (at $r$) if $\bigcup_{k\in\omega} R^k[r]=W$.

We let $\mathbf{K}$ denote the least normal modal logic, and recall that formulas of the form $\Box^np\to\Box^mp$ are equivalent modulo $\mathbf{K}$ to $\Diamond^mp\to\Diamond^np$; for instance,
\[
    \mathbf{K}\oplus\Diamond\Diamond p\to \Diamond\Diamond\Diamond p=\mathbf{K}\oplus\Box\Box\Box p\to \Box\Box p.
\]
Henceforth, we adopt the diamond formulation, and reserve the notation $\Phi$ for an arbitrary set of formulas of the form $\Diamond^mp\to \Diamond^n p$ for $n>m>1$. By Sahlqvist, $\mathbf{K}\oplus\Phi$ is sound and complete for the class of frames $(W,R)$ satisfying the following first-order condition, for each $\Diamond^mp\to \Diamond^n p\in \Phi$:
\[
    \forall x,y(xR^my\to xR^ny).
\]
We will show that it is complete for the class of \textit{finite} such frames. Our proof proceeds by combining a representation lemma with a filtration lemma, for which we first need a few further definitions. 

\begin{definition}
    Let $(W,R)$ be a frame with root $r$. The \textit{depth} of a world $w\in W$ w.r.t. $r$ is the length of a shortest path from $r$ to $w$. We denote this by 
    \[
        \text{d}_r(w)\mathrel{:=}\min\{k\in \omega\mid rR^kw\},
    \] 
   although we will often suppress the subscript and simply write $\text{d}(w)$. Note that $\text{d}_r(r)=0$.
\end{definition}

\begin{definition}
    The \textit{modal depth} of a formula $\varphi$, written $\text{md}(\varphi)$, is the maximal number of nested modalities in $\varphi$. That is, 
    \begin{align*}
        \text{md}(p) &= 0 \\
        \text{md}(\bot) &= 0 \\
        \text{md}(\neg\varphi) &= \text{md}(\varphi) \\
        \text{md}(\varphi \lor \psi) &= \max\{\text{md}(\varphi), \text{md}(\psi)\} \\
        \text{md}(\Diamond\varphi) &= \text{md}(\varphi)+1.
    \end{align*}
\end{definition}
\begin{definition}
    Let $(W, R, r)$ and $(W', R', r')$ be pointed frames. We say that a map $f:(W, R, r)\to (W', R',r')$ is a (pointed) \textit{p-morphism} if 
    \begin{enumerate}[leftmargin=50pt]
        \item [\textnormal{(pointed)}] $f(r)=r'$,
        \item [\textnormal{(forth)}] if $xRy$, then $f(x)R'f(y)$,
        \item [\textnormal{(back)}] if $f(x)R'y'$, then there is $y\in W$ s.t. $f(y)=y'$ and $xRy$.
    \end{enumerate}
    
\end{definition}
\begin{definition}
    Let $\mathfrak{M}=(W,R,V)$ and $\mathfrak{M}'=(W',R',V')$ be models. For $x\in W, x'\in W'$, we say that $x$ and $x'$ are \textit{bisimilar}, and write $x\sim x'$, if there is a relation $Z\subseteq W\times W'$ such that $(x,x')\in Z$ and for all $(w,w')\in Z$:
    \begin{enumerate}[leftmargin=50pt]
        \item [\textnormal{($a$)}] $\text{for all $p\in \mathsf{Prop}$: }w\in V(p)\Leftrightarrow w'\in V'(p)$,
        \item [\textnormal{($b$)}] \text{if $wRv$, then there is $v'\in W'$ with $w'R'v'$ and $vZv'$},
        \item [\textnormal{($c$)}] \text{if $w'R'v'$, then there is $v\in W$ with $wRv$ and $vZv'$.}    \end{enumerate}
\end{definition}
The following is well-known and readily proven.
\begin{fact}\label{fact:bis}
     Let $f:(W',R',r')\to (W, R, r)$ be a p-morphism. For any model $(W,R,V)$ over $(W,R)$, setting $V'(p)\mathrel{:=}f^{-1}[V(p)]$ yields a model $(W', R', V')$ such that for all $y,y'\in W'$,
     \[
        y\sim f(y),
     \]
     and
     \[
        f(y')=f(y)\quad \text{implies}\quad y'\sim y.
     \]
     Moreover, for any models $(W,R,V)$, $(W',R',V')$ and points $x\in W, x'\in W'$, if $x\sim x'$, then $x$ and $x'$ satisfy the same formulas.
\end{fact}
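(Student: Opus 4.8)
The plan is to exhibit explicit bisimulations witnessing the two claims about $\sim$, and then to prove the modal-equivalence statement by a routine induction on formula complexity; all three parts are standard, so the real work is just checking that the p-morphism conditions line up with the bisimulation clauses in the right directions and that I keep track of which model each point lives in.

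For the first claim, I would take $Z\mathrel{:=}\{(y',f(y'))\mid y'\in W'\}\subseteq W'\times W$, the graph of $f$, and verify it is a bisimulation between $(W',R',V')$ and $(W,R,V)$. Clause $(a)$ is immediate from $V'(p)=f^{-1}[V(p)]$, which gives $y'\in V'(p)\Leftrightarrow f(y')\in V(p)$. Clause $(b)$ is exactly (forth): $y'R'v'$ yields $f(y')Rf(v')$ with $(v',f(v'))\in Z$. Clause $(c)$ is exactly (back): if $f(y')Rw$, then some $v'$ has $f(v')=w$ and $y'R'v'$, so $(v',w)\in Z$. Since $(y,f(y))\in Z$ for every $y$, we conclude $y\sim f(y)$.

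For the second claim I would instead take $Z\mathrel{:=}\{(u',v')\in W'\times W'\mid f(u')=f(v')\}$, the kernel of $f$, and check it is a bisimulation of $(W',R',V')$ with itself. Clause $(a)$ follows since $f(u')=f(v')$ forces $u'$ and $v'$ to agree on every $V'(p)$, again using the pullback valuation. For clause $(b)$, given $u'R'w'$ with $f(u')=f(v')$, I apply (forth) to get $f(v')=f(u')Rf(w')$, then (back) at $v'$ to obtain $x'$ with $v'R'x'$ and $f(x')=f(w')$, so that $(w',x')\in Z$; clause $(c)$ is symmetric. As $(y',y)\in Z$ whenever $f(y')=f(y)$, this gives $y'\sim y$. (Alternatively one could derive this from the first claim by composing bisimulations, but the direct construction avoids that detour.)

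Finally, for the modal-equivalence statement, I would fix a bisimulation $Z$ witnessing $x\sim x'$ and prove by induction on $\varphi$ that $w\models\varphi\Leftrightarrow w'\models\varphi$ for every $(w,w')\in Z$. The atomic case is clause $(a)$, $\bot$ is trivial, and the Boolean cases are immediate from the induction hypothesis. For $\varphi=\Diamond\psi$: if $w\models\Diamond\psi$, pick $v$ with $wRv$ and $v\models\psi$; clause $(b)$ supplies $v'$ with $w'R'v'$ and $vZv'$, and the induction hypothesis gives $v'\models\psi$, hence $w'\models\Diamond\psi$; the converse uses clause $(c)$ symmetrically. I do not expect a genuine obstacle here—the only thing to watch is aligning the (forth)/(back) clauses of the p-morphism with the correct $(b)$/$(c)$ clauses of the bisimulation.
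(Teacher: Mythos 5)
Your proof is correct, and since the paper states this fact without proof (``well-known and readily proven''), your argument is precisely the standard one being appealed to: the graph of $f$ as a bisimulation for $y\sim f(y)$, the kernel $\{(u',v')\mid f(u')=f(v')\}$ as a bisimulation for the second claim, and induction on formula structure for modal invariance. All three verifications line up the (forth)/(back) clauses with the bisimulation clauses correctly, so there is nothing to fix.
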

With this, let us outline the proof architecture. It consists of two lemmas: a representation lemma and a filtration lemma. Together, they produce a finite $\mathbf{K}\oplus\Phi$ model from any given $\mathbf{K}\oplus\Phi$ model. We state these lemmas next, omitting their proofs for now, and show how they combine to establish the finite model property for $\mathbf{K}\oplus\Phi$.

\begin{lemma}[Representation]\label{lm:depthcondition}
    Let $\varphi$ be a formula. If $\varphi$ is satisfiable in a $\mathbf{K}\oplus\Phi $ frame, then it is satisfied at a root $r\in W$ of a model $(W,R, V)$ such that:
    \begin{align}
        \forall x,y\big(xRy\to \exists y'[xRy',\; \textnormal{d}_r(y')=\textnormal{d}_r(x)+1,\; y'\sim y]\big),\label{condition:depthbis}
    \end{align}
    and for each $\Diamond^mp\to \Diamond^np\in \Phi$,
    \begin{align}
        \forall x,z \;\;\exists w_1, \hdots, w_{n-1} 
        \big(&xR^mz\to[xRw_1R\cdots Rw_{n-1}Rz,\; \textnormal{d}_r(w_{j})=\textnormal{d}_r(x)+j]\big).\label{condition:depth}
    \end{align}
\end{lemma}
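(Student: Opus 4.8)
The plan is to start from any model witnessing satisfiability, pass to a tree-like model by unravelling, and then repair the two defects that unravelling introduces by adding a controlled set of "back-edges."

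First I would reduce to the rooted case: if $\varphi$ holds at some point of a $\mathbf{K}\oplus\Phi$ frame, pass to the submodel generated by that point. The frame condition $\forall x,y(xR^my\to xR^ny)$ is preserved under point-generated subframes, so this yields a rooted $\mathbf{K}\oplus\Phi$ model $\mathfrak{M}=(W,R,V)$ with root $r$ and $r\models\varphi$. Next I would form the tree unravelling $W^\ast$ of $\mathfrak{M}$, whose points are the finite $R$-paths from $r$, with the usual tree edges and valuation pulled back along the projection $f$ sending a path to its last point. By Fact~\ref{fact:bis}, $f$ is a p-morphism, so the root path $(r)$ is bisimilar to $r$ and still satisfies $\varphi$; moreover in the bare tree $\textnormal{d}(\sigma)=|\sigma|-1$ and every successor lies exactly one level deeper, so \eqref{condition:depthbis} holds for free. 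The difficulty is that the tree satisfies neither the frame condition nor \eqref{condition:depth}, since distinct path-lengths make $R^m$ and $R^n$ disjoint.

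The core of the proof is to augment the tree with edges realizing \eqref{condition:depth}. I would build $R^\ast$ as a fixpoint: starting from the tree edges, whenever a pair $\sigma R^{\ast m}\tau$ appears, use $f(\sigma)R^m f(\tau)$ together with the frame condition to pick an original $n$-path $f(\sigma)=x_0Rx_1\cdots Rx_n=f(\tau)$, route it through the tree nodes $\rho_j\mathrel{:=}(\,\sigma,x_1,\dots,x_j\,)$, which live at depth $\textnormal{d}(\sigma)+j$, and add the single edge $\rho_{n-1}\to\tau$. Since $f(\rho_{n-1})=x_{n-1}Rx_n=f(\tau)$, this edge respects the forth condition, so $f$ remains a p-morphism; and the path $\sigma R^\ast\rho_1R^\ast\cdots R^\ast\rho_{n-1}R^\ast\tau$ is exactly the depth-monotone witness required by \eqref{condition:depth}, which in turn gives $R^{\ast m}\subseteq R^{\ast n}$, so the result is again a $\mathbf{K}\oplus\Phi$ frame. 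Iterating over all emerging pairs, simultaneously for every axiom in $\Phi$, and taking unions yields $R^\ast$; because any fixed $m$-path uses finitely many edges, every $R^{\ast m}$-pair is witnessed at the fixpoint.

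I expect the main obstacle to be showing that these added edges do not corrupt the depth function, since \eqref{condition:depthbis} and \eqref{condition:depth} are stated in terms of the genuine depth $\textnormal{d}_r$. The key observation I would isolate and maintain as an invariant is that every added edge is \emph{length-non-increasing}: the edge $\rho_{n-1}\to\tau$ drops from sequence-length $|\sigma|+n-1$ to $|\tau|\le|\sigma|+m$, and $m\le n-1$. Hence along any $R^\ast$-path the sequence-length grows by at most one per step, so a point of length $\ell$ is never reached in fewer than $\ell-1$ steps; combined with the tree path this forces $\textnormal{d}_r(\sigma)=|\sigma|-1$ throughout, i.e. the back-edges create no shortcuts from the root. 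With depths pinned down, the remaining checks are routine: \eqref{condition:depthbis} follows because for any edge $xR^\ast y$ the tree-successor $(\,x,f(y)\,)$ sits at depth $\textnormal{d}(x)+1$ and, sharing its $f$-image with $y$, is bisimilar to it by Fact~\ref{fact:bis}; the back condition holds because tree edges are never deleted, keeping $f$ a p-morphism; and truth of $\varphi$ transfers back to $r$ as before.
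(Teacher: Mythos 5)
Your proposal is correct, and it reaches the lemma by a genuinely different construction than the paper's. You unravel first and then only ever add \emph{edges}: the witnesses $\rho_j=(\sigma,x_1,\dots,x_j)$ already exist as tree nodes, so each defect for condition~\eqref{condition:depth} costs a single back-edge $\rho_{n-1}\to\tau$, and your key invariant---every edge raises sequence-length by at most one, since $|\tau|\le|\sigma|+m\le|\sigma|+n-1=|\rho_{n-1}|$---pins $\textnormal{d}_r(\sigma)=|\sigma|-1$ globally and once and for all, with condition~\eqref{condition:depthbis} then coming essentially for free from tree children $(x,f(y))$ via Fact~\ref{fact:bis}. The paper instead works on the original frame (cut down to a countable one via Löwenheim--Skolem), repairs defects by adjoining \emph{fresh duplicate points} $u_1,\dots,u_{n-1}$ (resp.\ $u$) through two sublemmas (Lemmas~\ref{lm:conddepth} and~\ref{lm:conddepthbis}), and manages the step-by-step process with an enumeration of potential defects, verifying at each stage that the depths of all old points are preserved. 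Each route buys something: yours replaces the per-stage depth-preservation checks and the enumeration bookkeeping (including the paper's footnoted worry that resolved defects stay resolved, and the countability reduction---your simultaneous $\omega$-stage closure needs neither) with one transparent global invariant; the paper's step-by-step format is the more standard, modular template and extends directly to transfinite recursion without assuming anything about the frame. Both arguments share the same two pillars: maintaining a p-morphism onto the original $\mathbf{K}\oplus\Phi$ frame so that Fact~\ref{fact:bis} transfers satisfaction and supplies the bisimilar duplicates, and exact depth control of the witnessing paths.
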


\begin{lemma}[Filtration]\label{lm:filtration}
    Let $\varphi$ be a formula. If $\varphi$ is satisfied at a root $r\in W$ of a model $(W,R,V)$ fulfilling \eqref{condition:depthbis} and \eqref{condition:depth} for each $\Diamond^mp\to \Diamond^np\in \Phi$, then $\varphi$ is satisfiable in a finite $\mathbf{K}\oplus\Phi $ frame.
\end{lemma}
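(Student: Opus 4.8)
The plan is to build a finite filtration of the given model $(W,R,V)$ that respects the two depth conditions, and then verify that the resulting finite frame validates $\mathbf{K}\oplus\Phi$. The guiding idea is that condition~\eqref{condition:depthbis} lets me restrict attention, up to bisimulation, to successors that increase depth by exactly one, so every edge can be ``straightened'' into a depth-increasing edge. This means the model is, modulo bisimulation, \emph{layered} by depth: from a point at depth $d$ I only ever need to follow edges into depth $d+1$. Since satisfaction of $\varphi$ depends only on worlds within modal depth $\textnormal{md}(\varphi)$ of $r$, and those all lie at depth $\le \textnormal{md}(\varphi)$, I expect to need only finitely many layers.

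First I would fix $N=\textnormal{md}(\varphi)$ and consider the standard filtration through the set $\Sigma$ of subformulas of $\varphi$ (closed under single negation), but carried out \emph{layer by layer}: I would filtrate the worlds at each depth $d\le N$ separately, identifying $x,y$ at the same depth when they agree on all formulas in $\Sigma$ of modal depth $\le N-d$. This keeps each layer finite (boundedly many $\Sigma$-types) and finitely many layers, so the quotient $W'$ is finite. For the relation I would take a filtration-style relation $R'$ that links a class at depth $d$ to a class at depth $d+1$ exactly when some representatives are $R$-related in the original model; condition~\eqref{condition:depthbis} is what guarantees the ``forth'' direction survives the restriction to depth-increasing edges, so the truth lemma still goes through for formulas up to the relevant modal depth.

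The crucial step, and the one I expect to be the main obstacle, is showing that the filtrated frame $(W',R')$ still validates every $\Diamond^mp\to\Diamond^np\in\Phi$, i.e. that $x'R'^m z'$ implies $x'R'^n z'$ in the quotient. This is where condition~\eqref{condition:depth} does the real work: in the original model, whenever $xR^mz$ there is a \emph{depth-monotone} witnessing path $xRw_1R\cdots Rw_{n-1}Rz$ with $\textnormal{d}(w_j)=\textnormal{d}(x)+j$. Because each $w_j$ sits at a definite depth and the filtration respects depths, this path projects to a path of length $n$ in $(W',R')$ from $x'$ to $z'$, giving $x'R'^n z'$. I would need to check carefully that an $R'^m$-path in the quotient can be lifted (via the layerwise filtration and condition~\eqref{condition:depthbis}) to a genuine $R^m$-step in the original model between suitable representatives, so that condition~\eqref{condition:depth} can be applied; this lifting, matching the filtration's back-and-forth conditions against the depth bookkeeping, is the delicate part.

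Finally I would close the argument by invoking the truth lemma to conclude that $\varphi$ is satisfied at the root class $[r]$ of the finite model over $(W',R')$, and, since $(W',R')$ has just been shown to satisfy the first-order frame condition $\forall x,y(xR'^my\to xR'^ny)$ for each axiom in $\Phi$, it is a finite $\mathbf{K}\oplus\Phi$ frame by Sahlqvist completeness. Fact~\ref{fact:bis} would be used along the way to transfer satisfaction across the bisimilar depth-increasing successors supplied by~\eqref{condition:depthbis}. The whole construction hinges on the two conditions having been arranged, by the Representation Lemma, precisely so that depth and the modal structure are compatible; the filtration then only has to preserve depths to preserve both the truth of $\varphi$ and the axioms.
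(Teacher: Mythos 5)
There is a genuine gap, and it is fatal as stated: your relation $R'$ links a class at depth $d$ only to classes at depth $d+1$, which makes the quotient strictly layered — every edge increases depth by exactly one. In such a frame the condition $\forall x,z\,(xR'^m z \to xR'^n z)$ with $n>m$ is \emph{refuted} whenever any $m$-path exists, since the endpoint of an $m$-path from $x'$ lies in layer $\textnormal{d}(x')+m$, while any $n$-path from $x'$ ends in layer $\textnormal{d}(x')+n \neq \textnormal{d}(x')+m$. Concretely, your projection step breaks twice. First, the witnesses $w_1,\hdots,w_{n-1}$ supplied by \eqref{condition:depth} have depths $\textnormal{d}(x)+1,\hdots,\textnormal{d}(x)+n-1$, which can exceed $k=\textnormal{md}(\varphi)$, so for $\textnormal{d}(x)>k-n$ they fall outside the filtrated universe and have no class to project to. Second, even when all witnesses stay within depth $k$, the final edge $w_{n-1}\to z$ goes from depth $\textnormal{d}(x)+n-1$ down to depth $\leq \textnormal{d}(x)+m$, i.e.\ it is not depth-increasing, so it does not exist in your quotient at all. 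The paper's construction avoids both problems by (i) requiring only $\textnormal{d}(y)\leq \textnormal{d}(x)+1$ (not equality) for a quotient edge, and (ii) making every class at the boundary depth $k$ relate to \emph{all} classes, which provides self-loops and downward edges; the verification of the axiom then splits into cases, padding the witness path with self-loops at depth $k$ when the exact-depth witnesses would overshoot. This boundary treatment is the essential idea your proposal is missing.

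A secondary issue: you identify points at depth $d$ by agreement on subformulas of modal depth $\leq N-d$, but the paper uses bounded bisimilarity $\sim_{k-\textnormal{d}(x)}$ (over $\mathsf{Prop}(\varphi)$), and this matters. Both the well-definedness of the quotient relation and the lifting of a quotient $m$-path to a genuine $R$-path in $W$ — the step you flag as delicate but do not carry out — rely on the back-and-forth property of $\sim_j$ (if $x\sim_{j+1}x'$ and $xRy$, there is $y'$ with $x'Ry'$ and $y\sim_j y'$), which mere subformula-agreement does not supply. Bounded bisimilarity over the finite vocabulary $\mathsf{Prop}(\varphi)$ still has finitely many classes per layer, so it costs nothing; with it, the lifting in the paper's Case 1 goes through by a straightforward induction along the path, combined with \eqref{condition:depthbis} to straighten lifted paths into exact-depth paths when needed (Case 2).
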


\begin{theorem}
    $\mathbf{K}\oplus\Phi $ enjoys the finite model property.
\end{theorem}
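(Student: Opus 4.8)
The plan is to read the finite model property off directly from the two lemmas, using that $\mathbf{K}\oplus\Phi$ is Sahlqvist-complete for the class of frames satisfying $\forall x,y(xR^my\to xR^ny)$, one condition per axiom $\Diamond^mp\to\Diamond^np\in\Phi$. Recall that a normal modal logic $L$ has the FMP exactly when every $L$-consistent formula is satisfiable in a finite frame validating $L$ (equivalently, every non-theorem is refuted on some finite $L$-frame). So it suffices to show that each $\mathbf{K}\oplus\Phi$-consistent formula $\varphi$ is satisfiable in a finite $\mathbf{K}\oplus\Phi$-frame.

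First I would fix a $\mathbf{K}\oplus\Phi$-consistent formula $\varphi$. By soundness and completeness, $\varphi$ is then satisfiable in some $\mathbf{K}\oplus\Phi$-frame, i.e.\ a frame meeting all the relevant first-order conditions. Feeding $\varphi$ into the Representation Lemma (Lemma~\ref{lm:depthcondition}) yields a model $(W,R,V)$ with a root $r$ at which $\varphi$ holds and which satisfies both the depth-bisimulation condition~\eqref{condition:depthbis} and, for each axiom in $\Phi$, the layered path condition~\eqref{condition:depth}. This output model is itself a $\mathbf{K}\oplus\Phi$-model, since~\eqref{condition:depth} supplies, for every $xR^mz$, an explicit $n$-step path from $x$ to $z$ and hence $xR^nz$.

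I would then hand this model to the Filtration Lemma (Lemma~\ref{lm:filtration}). Its hypotheses are precisely what the representation step delivers---satisfaction of $\varphi$ at a root together with~\eqref{condition:depthbis} and~\eqref{condition:depth}---so the lemma returns a finite $\mathbf{K}\oplus\Phi$-frame on which $\varphi$ is satisfiable. Since $\varphi$ was an arbitrary consistent formula, this establishes the FMP.

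At the level of the theorem itself there is no genuine obstacle: it is a two-line corollary, modulo the routine completeness-to-satisfiability translation, obtained by composing the two lemmas. All the difficulty is deferred to them. I would expect the real work to lie in the Representation Lemma, which must simultaneously install a shortest-path depth stratification~\eqref{condition:depthbis} and, for each $\Diamond^mp\to\Diamond^np$ instance, thread an explicit $n$-step path through successive depth layers~\eqref{condition:depth}, all while preserving modal equivalence via bisimulation (so that $\varphi$ survives) and without destroying the frame conditions. The Filtration Lemma is then the place where finiteness is extracted, with the depth conditions engineered precisely so that the resulting quotient still validates each axiom of $\Phi$.
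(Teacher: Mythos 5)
Your proposal is correct and matches the paper's proof essentially verbatim: the theorem is obtained by chaining the Representation Lemma and the Filtration Lemma, with the Sahlqvist completeness step supplying the initial $\mathbf{K}\oplus\Phi$-frame model. Your added observation that condition~\eqref{condition:depth} makes the intermediate model a $\mathbf{K}\oplus\Phi$-model is true but unnecessary, since the Filtration Lemma only requires~\eqref{condition:depthbis} and~\eqref{condition:depth}.
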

\begin{proof}
    Suppose $\varphi$ is satisfiable in a $\mathbf{K}\oplus\Phi $ frame. Applying the preceding lemmas in sequence, we get that $\varphi$ is satisfiable in a finite $\mathbf{K}\oplus\Phi $ frame.
\end{proof}
\begin{corollary}
    For $\Phi$ finite, $\mathbf{K}\oplus\Phi $ is decidable.
\end{corollary}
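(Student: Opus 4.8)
The plan is to invoke the standard Harrop-style argument that a finitely axiomatizable logic with the finite model property is decidable. Concretely, I would show that both the set of theorems and the set of non-theorems of $\mathbf{K}\oplus\Phi$ are recursively enumerable; since a set whose complement is also recursively enumerable is decidable, this yields the corollary. The key input is the preceding Theorem, which supplies the FMP, together with the finiteness of $\Phi$, which makes the relevant enumerations effective.

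First, the theorems are recursively enumerable. Since $\Phi$ is finite, $\mathbf{K}\oplus\Phi$ is finitely axiomatized, so we can mechanically enumerate all finite derivations from the axioms of $\mathbf{K}$ together with $\Phi$ (closing under modus ponens, necessitation, and uniform substitution), thereby listing exactly the theorems.

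Second, the non-theorems are recursively enumerable, and this is where the FMP enters. A formula $\varphi$ is a non-theorem iff $\neg\varphi$ is satisfiable in some $\mathbf{K}\oplus\Phi$ frame, and by the Theorem this holds iff $\neg\varphi$ is satisfiable in some \emph{finite} such frame. I would therefore enumerate all finite frames $(W,R)$ up to isomorphism, discard those that are not $\mathbf{K}\oplus\Phi$ frames, and for each surviving frame run through its finitely many valuations and finitely many points, testing whether $\neg\varphi$ is satisfied. Because $\Phi$ is finite, recognizing a finite $\mathbf{K}\oplus\Phi$ frame is effective: one checks, for each of the finitely many axioms $\Diamond^mp\to\Diamond^np\in\Phi$, the first-order condition $\forall x,y\,(xR^my\to xR^ny)$, which is decidable on a finite domain. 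Dovetailing this search over all finite frames produces an enumeration of the non-theorems.

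The subtle point—where the hypothesis that $\Phi$ is finite is genuinely used—is precisely the effective recognizability of the finite $\mathbf{K}\oplus\Phi$ frames and the finiteness of the axiomatization; were $\Phi$ infinite (even if still recursive, one would need more care, and in general neither enumeration need be computable). Beyond this, the argument is entirely routine: the boundedness of the valuations and points on a fixed finite frame makes each per-frame satisfaction check decidable, so no genuine obstacle remains once the Theorem is in hand.
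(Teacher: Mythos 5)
Your proposal is correct and is essentially the paper's proof spelled out: the paper simply invokes the standard fact that finitely axiomatizable normal modal logics with the FMP are decidable, and your argument is the usual unpacking of that fact (r.e.\ theorems via finite axiomatization, r.e.\ non-theorems via enumeration of finite frames with the effectively checkable first-order condition $\forall x,y\,(xR^my\to xR^ny)$). Your remark on where finiteness of $\Phi$ is genuinely used is accurate and matches the paper's hypothesis.
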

\begin{proof}
    Finitely axiomatizable normal modal logics with the finite model property are decidable.
\end{proof}

\section{Proof of the Representation and Filtration Lemmas}\label{sec:RepandFil}
\subsection{Round One: The Representation Lemma}\label{subsec:Rep}
It remains to prove the Representation Lemma (Lemma~\ref{lm:depthcondition}) and the Filtration Lemma (Lemma~\ref{lm:filtration}). We begin with the former. Conceptually, the lemma is straightforward: it formalizes the idea that in a $\mathbf{K}\oplus\Diamond^mp\to\Diamond^np$ frame, whenever $xR^mz$, there exist $w_1,\hdots, w_{n-1}$ s.t. $xRw_1R\cdots Rw_{n-1}Rz$; and if $w_1,\hdots, w_{n-1}$ fail to have the depth required by condition~\eqref{condition:depth}, we may add `duplicates', which we will be denoting $u_1, \hdots, u_{n-1}$, and define $xR'u_{1}R'\cdots R'u_{n-1}R'z$ so that the only paths to each $u_{j}$ pass through $x$, whence the $u_{j}$s will have the required depth. This takes care of condition~\eqref{condition:depth}; an analogous idea handles condition~\eqref{condition:depthbis}. 

To make this precise, we use a step-by-step argument that applies two sublemmas: one to enforce \eqref{condition:depth}, and another to enforce \eqref{condition:depthbis}.\footnote{The method is standard and follows a setup similar to those in \textcite{Knudstorp2023:jpl,Knudstorp2023:tbillc}. See also \textcite{bluebook} for an introduction to the step-by-step method.} We begin with the sublemma for condition~\eqref{condition:depth}. For clarity, note that the subscripts $_i$ and $_{i+1}$ in $(W_i, R_i, r_i)$, $f_i$, and so on, used in the lemma statement are \textit{not} variables but fixed labels, chosen for convenience in our later proof of Lemma~\ref{lm:depthcondition}.
\begin{convention}
Since $\mathbf{K}\oplus \Phi$ frames are closed under point-generated subframes, which are rooted by definition, we henceforth assume that pointed frames $(W,R,r)$ are rooted at $r$.
\end{convention}
\begin{lemma}[sublemma for~\eqref{condition:depth}]\label{lm:conddepth}
    Let $f_{i}: (W_{i}, R_{i}, r_i) \to  (W, R,r)$ be a p-morphism, for $(W_{i}, R_{i})$ a \textnormal{Kripke frame} and $(W, R)$ a \textnormal{$\mathbf{K}\oplus\Phi$ frame}. Then for every $\Diamond^mp\to\Diamond^np\in \Phi $, if $xR_{i}^mz$, then \textnormal{(a)} there are $w_1,\hdots, w_{n-1}\in W$ and $v_{1},\hdots, v_{n-1}\in W_{i}$ such that 
    \[
        f_{i}(x)Rw_1R\cdots Rw_{n-1}Rf_{i}(z),\qquad xR_{i}v_1R_i\cdots R_{i}v_{n-1},\qquad f_{i}(v_j)=w_j;
    \]
    and \textnormal{(b)} for $u_{1},\hdots, u_{n-1}$ distinct points not in $W_i$, the map $f_{i+1}:(W_{i+1}, R_{i+1}, r_i)\to (W,R,r)$ given by
\[
\begin{aligned}
  W_i \ni y &\mapsto f_i(y) \\
  u_j &\mapsto w_j,
\end{aligned}
\]      
where
\[
W_{{i+1}} \mathrel{:=} W_{i} \cup \{u_{1},\hdots, u_{n-1}\},\quad R_{{i+1}} \mathrel{:=} R_{i} \cup \{(x,u_1), \hdots, (u_{j}, u_{j+1}), \hdots, (u_{n-1}, z)\} \cup \{(u_{j},c)\mid v_{j}R_{i} c\},
\]
defines a p-morphism such that 
\[
     \textnormal{d}_{i+1}(u_j)=\textnormal{d}_{i+1}(x)+j\quad \text{and}\quad \textnormal{d}_{i+1}(y)=\textnormal{d}_{i}(y) \text{ for all $y\in W_{i}$}.
\]
Here, for all $y\in W_{i+1}$, $\textnormal{d}_{i+1}(y)$ denotes the length of a shortest path from $r_{i}$ to $y$ in $(W_{i+1}, R_{i+1})$; and $\textnormal{d}_{i}(y)$ denotes the length of a shortest path from $r_{i}$ to $y$ in $(W_{i}, R_{i})$.
\end{lemma}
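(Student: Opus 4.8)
The plan is to obtain (a) directly from the p-morphism axioms together with the frame condition, and then to verify (b) by a routine case check for the three p-morphism clauses, followed by a careful shortest-path analysis for the two depth equations. I expect the depth equations to be the only real work.

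For part (a): starting from $xR_i^mz$, I would apply (forth) $m$ times to get $f_i(x)R^mf_i(z)$ in $W$. Since $(W,R)$ is a $\mathbf{K}\oplus\Phi$ frame and $\Diamond^mp\to\Diamond^np\in\Phi$, the associated frame condition $\forall u,v(uR^mv\to uR^nv)$ yields $f_i(x)R^nf_i(z)$, so there exist $w_1,\dots,w_{n-1}\in W$ with $f_i(x)Rw_1R\cdots Rw_{n-1}Rf_i(z)$. To pull these back into $W_i$ I would apply (back) repeatedly: from $f_i(x)Rw_1$ obtain $v_1$ with $xR_iv_1$ and $f_i(v_1)=w_1$; from $f_i(v_1)=w_1Rw_2$ obtain $v_2$ with $v_1R_iv_2$ and $f_i(v_2)=w_2$; iterating gives $v_1,\dots,v_{n-1}$ with $xR_iv_1R_i\cdots R_iv_{n-1}$ and $f_i(v_j)=w_j$, exactly as required.

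For the p-morphism clauses in (b): (pointed) is immediate since $f_{i+1}$ extends $f_i$ and $r_i\in W_i$. For (forth) I would run through the edges in $R_{i+1}\setminus R_i$: the chain edges $(x,u_1)$, $(u_j,u_{j+1})$, $(u_{n-1},z)$ map under $f_{i+1}$ to $f_i(x)Rw_1$, $w_jRw_{j+1}$, $w_{n-1}Rf_i(z)$, all furnished by (a); and an edge $(u_j,c)$ with $v_jR_ic$ maps to $w_j=f_i(v_j)Rf_i(c)$ by (forth) for $f_i$. For (back) the only new case is $a=u_j$: here $f_{i+1}(u_j)=f_i(v_j)Rb'$, so (back) for $f_i$ supplies $c\in W_i$ with $v_jR_ic$ and $f_i(c)=b'$, and $(u_j,c)\in R_{i+1}$ by construction.

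The crux, and the step I expect to be the main obstacle, is the depth computation. The key structural observation is that the out-edges of each $u_j$ into $W_i$ exactly mirror those of $v_j$ (namely $u_jR_{i+1}c\iff v_jR_ic$ for $c\in W_i$), while the only edges \emph{into} the block $\{u_1,\dots,u_{n-1}\}$ are the chain edges $(x,u_1)$ and $(u_j,u_{j+1})$. I would first establish $\text{d}_{i+1}(y)=\text{d}_i(y)$ for $y\in W_i$. Since $R_i\subseteq R_{i+1}$, the inequality $\text{d}_{i+1}(y)\le\text{d}_i(y)$ is free; for the reverse I take a shortest $(W_{i+1},R_{i+1})$-path from $r_i$ to $y$ and replace each ``excursion'' $x\to u_1\to\cdots\to u_k\to c$ through the block (which must begin at $x$, by the entry observation) with the equally long $W_i$-path $x\to v_1\to\cdots\to v_k\to c$ from (a), or, when the excursion leaves via the edge $(u_{n-1},z)$, with the strictly shorter path witnessing $xR_i^mz$ (recall $m<n$); iterating removes all $u$'s without increasing length, so $\text{d}_i(y)\le\text{d}_{i+1}(y)$. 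Finally, for $\text{d}_{i+1}(u_j)=\text{d}_{i+1}(x)+j$: prepending a shortest $(W_{i+1},R_{i+1})$-path from $r_i$ to $x$ to the chain $x\to u_1\to\cdots\to u_j$ gives ``$\le$'', while ``$\ge$'' follows because the forced in-edges to the block mean any path reaching $u_j$ must traverse $x\to u_1\to\cdots\to u_j$ as its final $j$ edges, after first reaching $x$.
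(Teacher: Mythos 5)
Your proposal is correct and takes essentially the same approach as the paper: part (a) via forth, the frame condition, and $n-1$ applications of back; the p-morphism check by the same case split on edge types; and the depth equations by the same path-surgery idea, with the paper phrasing your excursion replacement edge-locally (via $\textnormal{d}_{i}(v_j)\leq \textnormal{d}_{i+1}(u_j)$ and $R_{i+1}[u_j]\setminus\{u_{j+1}\}\subseteq R_{i}[v_j]$) and handling the $(u_{n-1},z)$ exit with the same $m<n$ shortcut through the witnessing $R_i^m$-path.
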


\begin{proof}
    Let $\Diamond^mp\to\Diamond^np\in \Phi$ be arbitrary and suppose $xR_{i}y_1R_i\cdots R_i y_{m-1}R_{i}z$. By the forth clause, we get that $f_{i}(x)Rf_{i}(y_1)R\cdots R f_{i}(y_{m-1})Rf_{i}(z)$. So as $(W, R)$ is a $\mathbf{K}\oplus\Phi $ frame, there must be $w_1,\hdots, w_{n-1}\in W$ s.t. $f_{i}(x)Rw_1R\cdots Rw_{n-1}Rf_{i}(z)$. Applying the back clause $n-1$ times, we additionally find $v_1,\hdots,  v_{n-1}\in W_{i}$ s.t. $xR_{i}v_1R_i\cdots R_{i}v_{n-1}$ and $f_{i}(v_{j})=w_j$, for all $j\in\{1,\hdots, n-1\}$. This proves (a).

    Consequently, defining $W_{i+1}, R_{i+1}$, and $f_{i+1}$ by adding fresh points $u_{1},\hdots, u_{n-1}$ as in the lemma statement, it only remains to show that $f_{i+1}$ is a p-morphism such that
    \[
        \textnormal{d}_{i+1}(u_j)=\textnormal{d}_{i+1}(x)+j\text{, for all $j\in \{1,\hdots, n-1\}$,}\qquad \text{and}\qquad \textnormal{d}_{i+1}(y)=\textnormal{d}_{i}(y) \text{, for all $y\in W_{i}$}.
    \]
    We begin with showing that $f_{i+1}$ is a p-morphism. Observe that $f_{i+1}(r_i)=f_i(r_i)=r$, since $f_i$ is a pointed p-morphism. Now for the forth condition, assume $aR_{i+1}b$. We prove that $f_{i+1}(a)Rf_{i+1}(b)$ by cases. 
    \begin{itemize}
        \item If $(a,b)\in R_{i}$, then forth for $f_{i}$ gives us $f_{i+1}(a)=f_{i}(a)Rf_{i}(b)=f_{i+1}(b)$.
        \item If $(a,b)\in \{(x,u_1), \hdots,(u_{j}, u_{j+1}), \hdots,(u_{n-1}, z)\}$, then $f_{i+1}(a)Rf_{i+1}(b)$ follows because $f_{i}(x)Rw_1R\cdots Rw_{n-1}Rf_{i}(z)$.
        \item If $(a,b)=(u_{j}, c)$ for $c\in W_{i}$ s.t. $v_{j}R_{i}c$, then the claim follows by forth for $f_{i}$ because $f_{i+1}(u_{j})=w_j=f_{i}(v_{j})Rf_{i}(c)=f_{i+1}(c)$.
    \end{itemize}
    
For the back condition, suppose $f_{i+1}(a)Rb'$. If $a\in W_{i}$, then $f_{i}(a)=f_{i+1}(a)Rb'$, so by the back condition for $f_{i}$, there is $b\in W_{i}$ s.t. $aR_{i}b$ and $f_{i}(b)=b'$, hence also $aR_{i+1}b$ and $f_{i+1}(b)=f_{i}(b)=b'$. If $a\notin W_{i}$, so $a=u_{j}$, then $f_{i}(v_{j})=w_j=f_{i+1}(u_{j})Rb'$, so by the back condition for $f_{i}$, there is $b\in W_{i}$ s.t. $f_{i}(b)=b'$ and $v_{j}R_{i}b$. But then $u_{j}R_{i+1}b$ and $f_{i+1}(b)=f_{i}(b)=b'$. Thus, $f_{i+1}$ meets the back condition, and we may therefore conclude that it is a p-morphism.

Lastly, we have to show that $\textnormal{d}_{i+1}(u_j)=\textnormal{d}_{i+1}(x)+j$ and $\textnormal{d}_{i+1}(y)=\textnormal{d}_{i}(y)$ for all $y\in W_{i}$. To begin, observe that $(W_{i+1}, R_{i+1})$ is rooted at $r_{i}$, hence $\textnormal{d}_{i+1}(y)$ is well-defined for all $y\in W_{i+1}$. Now to see that $\textnormal{d}_{i+1}(u_j)=\textnormal{d}_{i+1}(x)+j$, note that all paths from $r_{i}$ to $u_{1}$ must start with an $R_{i}$-path to $x$ and end with $(x,u_{1})\in R_{i+1}$; similarly, paths to $u_{j+1}$ must start with an $R_{i+1}$-path to $u_{j}$ and end with $(u_{j},u_{j+1})\in R_{i+1}$. We thus obtain 
\[
    \textnormal{d}_{i+1}(u_j)=\textnormal{d}_{i+1}(x)+j,\quad \text{along with}\quad \textnormal{d}_{i+1}(x)=\textnormal{d}_{i}(x).
\]
We still need to verify that $\textnormal{d}_{i+1}(y)=\textnormal{d}_{i}(y)$ for all $y\in W_{i}$. For convenience, write $u_0\mathrel{:=}x$ and $u_n\mathrel{:=}z$, and observe that for each $j\in \{1,\hdots, n-1\}$,
\begin{align*}
    &\textnormal{d}_{i}(v_{j})\leq \textnormal{d}_{i}(x)+j =\textnormal{d}_{i+1}(u_{j}),\\
    &R_{i+1}[u_{j}]\setminus\{u_{j+1}\}\subseteq R_{i}[v_{j}].
\end{align*}
This suffices because then for any $R_{i+1}$-path (starting at $r_i$):
\begin{itemize}
    \item if it contains $(u_{n-1},u_n)\in R_{i+1}$, then there is a not longer $R_{i}$-path to $u_n=z$ as $xR_{i}y_1R_i\cdots R_i y_{m-1}R_{i}z$ implies that $\textnormal{d}_{i}(z)\leq \textnormal{d}_{i}(x)+m=\textnormal{d}_{i+1}(u_m)\leq \textnormal{d}_{i+1}(u_{n-1})$;
    \item if it contains $(u_{j},u_{j+1})\in R_{i+1}$ for $j<n-1$ followed by $(u_{j+1},a)\in R_{i+1}$ for $a\neq u_{j+2}$, then there is a not longer $R_{i}$-path to $a$ as $\textnormal{d}_{i}(v_{j+1})\leq\textnormal{d}_{i+1}(u_{j+1})$ and $R_{i+1}[u_{j+1}]\setminus\{u_{j+2}\}\subseteq R_{i}[v_{j+1}]$ implies $v_{j+1}R_{i}a$.
\end{itemize} 
Thus $\textnormal{d}_{i+1}(y)=\textnormal{d}_i(y)$ for all $y\in W_i$, which concludes the proof of the lemma.
\end{proof}
Next, we state and prove the sublemma used for enforcing condition~\eqref{condition:depthbis}.

\begin{lemma}[sublemma for~\eqref{condition:depthbis}]\label{lm:conddepthbis}
    Let $f_{i}: (W_{i}, R_{i}, r_i) \to  (W, R,r)$ be a p-morphism, for $(W_{i}, R_{i})$ a \textnormal{Kripke frame} and $(W, R)$ a \textnormal{$\mathbf{K}\oplus\Phi$ frame}. If $wR_{i}v$, then for $u\notin W_i$, the map $f_{i+1}:(W_{i+1}, R_{i+1},r_i)\to (W,R,r)$ given by
\[
\begin{aligned}
  W_i \ni y &\mapsto f_i(y) \\
  u &\mapsto f_i(v),
\end{aligned}
\]      
where
\[
W_{{i+1}} \mathrel{:=} W_{i} \cup \{u\},\quad R_{{i+1}} \mathrel{:=} R_{i} \cup \{(w,u)\} \cup \{(u,c)\mid vR_{i} c\},
\]
defines a p-morphism such that 
\[
     \textnormal{d}_{i+1}(u)=\textnormal{d}_{i+1}(w)+1\quad \text{and}\quad \textnormal{d}_{i+1}(y)=\textnormal{d}_{i}(y) \text{ for all $y\in W_{i}$.}
\]
\end{lemma}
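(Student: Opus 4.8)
The plan is to follow the template of Lemma~\ref{lm:conddepth}, only now the construction is lighter: we adjoin a single fresh point $u$ duplicating the chosen successor $v$ of $w$, rather than an entire chain. Note at the outset that, in contrast to the previous sublemma, the $\mathbf{K}\oplus\Phi$ structure of $(W,R)$ is never used here; all we need is that $f_i$ is a p-morphism and that the edge $wR_iv$ is present.

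First I would check that $f_{i+1}$ is a p-morphism. The pointed clause is immediate, since $r_i\in W_i$ gives $f_{i+1}(r_i)=f_i(r_i)=r$. For forth, assume $aR_{i+1}b$ and split into the three kinds of edge: if $(a,b)\in R_i$ this is forth for $f_i$; if $(a,b)=(w,u)$ then $f_{i+1}(w)=f_i(w)Rf_i(v)=f_{i+1}(u)$ by forth for $f_i$ applied to $wR_iv$; and if $(a,b)=(u,c)$ with $vR_ic$ then $f_{i+1}(u)=f_i(v)Rf_i(c)=f_{i+1}(c)$, again by forth for $f_i$. For back, suppose $f_{i+1}(a)Rb'$: if $a\in W_i$, the back clause for $f_i$ returns a witness $b\in W_i$ with $aR_ib$, hence $aR_{i+1}b$; and if $a=u$, then $f_{i+1}(u)=f_i(v)Rb'$, so back for $f_i$ gives $b\in W_i$ with $vR_ib$ and $f_i(b)=b'$, whence $(u,b)\in R_{i+1}$ by construction. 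In every case $f_{i+1}(b)=b'$, so $f_{i+1}$ is a p-morphism.

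It remains to handle the depths. Since every point of $W_i$ is $R_i$-reachable from $r_i$ and $u$ is reached via a path to $w$ followed by $(w,u)$, the frame $(W_{i+1},R_{i+1})$ is rooted at $r_i$ and $\textnormal{d}_{i+1}$ is well-defined. For $\textnormal{d}_{i+1}(u)$, observe that the only $R_{i+1}$-edge into $u$ is $(w,u)$, so every path from $r_i$ to $u$ factors as a path to $w$ followed by this edge, giving $\textnormal{d}_{i+1}(u)=\textnormal{d}_{i+1}(w)+1$. For the old points, $R_i\subseteq R_{i+1}$ yields $\textnormal{d}_{i+1}(y)\le\textnormal{d}_i(y)$ for free, so the one thing needing care---the only real content of the lemma---is the reverse inequality, i.e.\ that $u$ creates no shortcuts.

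I would establish this with a projection $\pi:W_{i+1}\to W_i$ defined by $\pi(u)=v$ and $\pi(y)=y$ for $y\in W_i$. The key claim is that $\pi$ maps every $R_{i+1}$-edge to an $R_i$-edge: edges of $R_i$ are fixed; the edge $(w,u)$ maps to $(w,v)\in R_i$ precisely because $wR_iv$; and any edge $(u,c)$ with $vR_ic$ maps to $(v,c)\in R_i$. Hence $\pi$ carries any $R_{i+1}$-walk from $r_i$ to a walk of the same length in $(W_i,R_i)$ starting at $\pi(r_i)=r_i$, and since $\pi$ fixes $W_i$, a shortest $R_{i+1}$-walk to $y\in W_i$ yields an $R_i$-walk to $y$ of the same length, giving $\textnormal{d}_i(y)\le\textnormal{d}_{i+1}(y)$. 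Combined with the easy inequality, this gives $\textnormal{d}_{i+1}(y)=\textnormal{d}_i(y)$ for all $y\in W_i$, completing the proof. Because the construction adds only a single point with a unique predecessor, no genuine obstacle arises; the projection $\pi$ simply packages the depth bookkeeping that was carried out by hand in Lemma~\ref{lm:conddepth}.
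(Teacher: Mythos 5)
Your proof is correct, and in substance it is what the paper intends: the paper's own proof of Lemma~\ref{lm:conddepthbis} is a one-liner deferring to ``simplified versions of the corresponding arguments'' of Lemma~\ref{lm:conddepth}, and your p-morphism verification (pointed clause, three edge-cases for forth, two cases for back) is exactly that simplification, as is your observation that the only $R_{i+1}$-edge into $u$ is $(w,u)$. Where you genuinely depart from the paper is the no-shortcuts step: Lemma~\ref{lm:conddepth} argues by path surgery, using $\textnormal{d}_{i}(v_j)\leq \textnormal{d}_i(x)+j$ and $R_{i+1}[u_j]\setminus\{u_{j+1}\}\subseteq R_i[v_j]$ to replace any segment of a walk passing through the new points by a not-longer $R_i$-segment, whereas you package the whole computation as a length-preserving projection $\pi$ fixing $W_i$ with $\pi(u)=v$. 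This works and is cleaner: $\pi$ is an edge-homomorphism precisely because $R_{i+1}[u]=R_i[v]$ and the unique predecessor $w$ of $u$ satisfies $wR_iv$, so $\textnormal{d}_i(y)\leq\textnormal{d}_{i+1}(y)$ falls out immediately. One caveat concerns your closing sentence: the projection does \emph{not} ``simply package'' the bookkeeping of Lemma~\ref{lm:conddepth}. There the analogous map $u_j\mapsto v_j$ fails to be an edge-homomorphism, since the final edge $(u_{n-1},z)$ would have to project to $(v_{n-1},z)$, and $v_{n-1}R_iz$ need not hold---one only has the chain $xR_iv_1R_i\cdots R_iv_{n-1}$, with no guaranteed edge from $v_{n-1}$ to $z$. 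That failure is exactly why the paper's proof of Lemma~\ref{lm:conddepth} takes the detour through $\textnormal{d}_i(z)\leq\textnormal{d}_i(x)+m$ via the original $m$-path; your projection succeeds here precisely because $u$ duplicates $v$ exactly, in successors and predecessor alike. Your remark that the $\mathbf{K}\oplus\Phi$ hypothesis on $(W,R)$ is idle in this sublemma is also correct: only the p-morphism $f_i$ and the edge $wR_iv$ are used.
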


\begin{proof}
    Define $W_{i+1}, R_{i+1}$, and $f_{i+1}$ by adding a point $u\notin W_i$ as in the lemma statement. Both that $f_{i+1}$ is a p-morphism and that
\[
     \textnormal{d}_{i+1}(u)=\textnormal{d}_{i+1}(w)+1\quad \text{and}\quad \textnormal{d}_{i+1}(y)=\textnormal{d}_{i}(y) \text{ for all $y\in W_{i}$}
\]
follow by simplified versions of the corresponding arguments from the preceding lemma.
\end{proof}

With these sublemmas at hand, we can now prove the Representation Lemma (Lemma~\ref{lm:depthcondition}).

\begin{proof}[Proof of the Representation Lemma~\ref{lm:depthcondition}]
    If $\varphi$ is satisfiable in a $\mathbf{K}\oplus\Phi $ frame, then as $\mathbf{K}\oplus\Phi $ frames are closed under point-generated subframes, $\varphi$ is satisfiable at a root of a $\mathbf{K}\oplus\Phi$ frame. 
    
    It therefore suffices to show that for every $\mathbf{K}\oplus\Phi$ frame $(W,R,r)$, there are a frame $(W', R', r')$ and a pointed p-morphism $f:(W',R',r') \to (W,R,r)$ satisfying \eqref{condition:depth} and
    \begin{align}
        \forall x,y\big(xR'y\to \exists y'[xR'y',\; \textnormal{d}_{r'}(y')=\textnormal{d}_{r'}(x)+1,\; f(y')= f(y)]\big).\label{condition:depthpmorphism}
    \end{align}
    This suffices by Fact~\ref{fact:bis}, because for any model $\mathfrak{M}=(W,R,V)$ over $(W,R)$, defining $V'(p)\mathrel{:=}f^{-1}[V(p)]$ yields a model $\mathfrak{M}'=(W',R', V')$ over $(W',R')$ such that $r'\sim f(r')=r$, and $f(y')=f(y)$ implies $y'\sim y$.
    
    Accordingly, let $(W,R,r)$ be a rooted $\mathbf{K}\oplus\Phi $ frame. Assume without loss of generality that $|W|\leq \aleph_0$.\footnote{As the frame conditions for $\mathbf{K}\oplus\Phi $ are first-order and $\Phi $ is countable, it follows by Löwenheim-Skolem through the standard translation that $\mathbf{K}\oplus\Phi $ has the countable model property. Alternatively, instead of assuming $W$ to be countable, one can apply transfinite recursion in what follows.}

    We construct $(W', R', r')$ and $f$ step by step, applying the preceding lemmas iteratively. 
    
    First, we set $(W_0, R_0,r_0)\mathrel{:=}(W,R,r)$, and let $f_0\mathrel{:=}\mathrm{id}:(W,R,r)\to (W,R,r)$ be the identity map. 
    
    Then, we fix a countably infinite set $X$ disjoint from $W$, and enumerate all tuples of
    \[
        (W\cup X)^2\cup \bigcup_{\Diamond ^mp\to \Diamond^n p\in \Phi }(W\cup X)^{2}\times \{\Diamond^mp\to \Diamond^np\},
    \]
    which is countable as $\Phi , W, X$ all are countable. $(W_{i+1}, R_{i+1}, r_{i+1})$ and $f_{i+1}$ are then constructed from $(W_{i}, R_{i}, r_i)$ and $f_{i}$ by applying the \textit{appropriate} preceding sublemma to the least tuple 
    \[
        \overline{x}\in (W\cup X)^2\cup \bigcup_{\Diamond ^mp\to \Diamond^n p\in \Phi }(W\cup X)^{2}\times \{\Diamond^mp\to \Diamond^np\}
    \]
    of our enumeration constituting a \textit{defect} for $(W_{i}, R_{i}, r_i), f_i$. 
    
    A tuple $\bar{x}=(x,y)\in (W\cup X)^2$ constitutes a defect for $(W_{i}, R_{i}, r_i), f_i$ if
    \[
        xR_iy\quad \text{yet there is no $y'\in W_i$ s.t. $xR_iy',\; \textnormal{d}_i(y')=\textnormal{d}_i(x)+1,\; f_i(y')=f_i(y)$,} 
    \]
    and its appropriate sublemma is Lemma~\ref{lm:conddepthbis}.

    A tuple $\bar{x}=(x,z, \Diamond^mp\to\Diamond^np)$ constitutes a defect for $(W_{i}, R_{i}, r_i), f_i$ if
    \begin{align*}
        xR_i^mz\quad &\text{yet there are no $w_1,\hdots,  w_{n-1}\in W_i$}\text{ s.t. $xR_iw_1R_i\cdots R_iw_{n-1}R_iz\textnormal{ and }\textnormal{d}_i(w_j)=\textnormal{d}_i(x)+j$,}
    \end{align*}
    and its appropriate sublemma is Lemma~\ref{lm:conddepth}.
        
The added points---$\{u\}=W_{i+1}\setminus W_{i}$ when applying Lemma~\ref{lm:conddepthbis}, and $\{u_1, \hdots, u_{n-1}\}=W_{i+1}\setminus W_{i}$ when applying Lemma~\ref{lm:conddepth}---are drawn from $X\setminus W_i$, which is possible as $X\setminus W_i$ is infinite for all $i\in \omega$, since we only draw finitely many points in each application of a sublemma.

Letting 
\[
    W'\mathrel{:=}\bigcup_{i\in \omega} W_i, \quad R'\mathrel{:=}\bigcup_{i\in \omega} R_{i}, \quad r'\mathrel{:=} r, \quad f\mathrel{:=}\bigcup_{i\in\omega}f_i,
\]
we have to show that
\begin{itemize}
    \item $f:(W', R')\to (W,R)$ is a p-morphism with $f(r')=r$; and
    \item $(W',R')$ is rooted at $r'$ and satisfies conditions \eqref{condition:depth} and \eqref{condition:depthpmorphism}.
\end{itemize}
That $f$ is well-defined as a function follows from all $f_i$ being functions and that, by definition, $f_{i+1}{\upharpoonright} W_i=f_i$; and that $f(r')=r$ is a consequence of $r'=r$ and $f_0=\mathrm{id}$. It satisfies the forth clause because each $f_i$ does, and, likewise, it satisfies the back clause because each $f_i$ does. Thus, we conclude that $f$ is a p-morphism.

Next, that $(W',R')$ is rooted at $r'=r$ follows from each $(W_i, R_i)$ being rooted at $r_i=r$. 

Lastly, to see that it satisfies the conditions~\eqref{condition:depth} and~\eqref{condition:depthpmorphism}, first observe that for all $w\in W'=\bigcup_{i\in \omega} W_i$, there is a least $i\in\omega$ s.t. $w\in W_i$ and, by the preceding lemmas, for all $j\geq i$, $\textnormal{d}_j(w)=\textnormal{d}_i(w)$; hence $\textnormal{d}'(w)=\textnormal{d}_i(w)$. 

Now for condition \eqref{condition:depth}, let $\Diamond^mp\to \Diamond^np\in \Phi$ be arbitrary and suppose $xR^{\prime^m}
z$. Then there is a least $i\in\omega$ s.t. $\{x,z\}\subseteq W_i$ and $xR_i^mz$, and we also have $\textnormal{d}'(x)=\textnormal{d}_l(x)$ for all $l\geq i$. If there are $w_1, \hdots, w_{n-1}\in W_i$ s.t. $xR_iw_1R_i\cdots R_iw_{n-1}R_iz\textnormal{ and }\textnormal{d}_i(w_j)=\textnormal{d}_i(x)+j$, then we have $xR'w_1R'\cdots R'w_{n-1}R'z\textnormal{ and }\textnormal{d}'(w_j)=\textnormal{d}'(x)+j$, as required. If not, then as $(x,z)\in W_i^{2}\subseteq (W\cup X)^{2}$, the tuple $(x,z,\Diamond^mp\to\Diamond^np)$ has been assigned some number $k\in\omega$ in our enumeration. So by construction, at some stage $l\leq i+k+1$, Lemma~\ref{lm:conddepth} will have been applied to $(x,z,\Diamond^mp\to\Diamond^np)$,\footnote{Here we tacitly use that a defect resolved at stage $i$ remains resolved for all $j\geq i$, because $R_j\supseteq R_i$ and $\textnormal{d}_j(w)=\textnormal{d}_i(w)$ for all $w\in W_i$; otherwise the construction could, for example, oscillate between the first two tuples of the enumeration.} and points $u_1,\hdots, u_{n-1}$ will have been added so that $xR_{l}u_1R_{l}\cdots R_{l}u_{n-1}R_{l}z\textnormal{ and }\textnormal{d}_{l}(u_j)=\textnormal{d}_{l}(x)+j$. Consequently, also in this case, we have the required, as then $xR'u_1R'\cdots R'u_{n-1}R'z\textnormal{ and }\textnormal{d}'(u_j)=\textnormal{d}'(x)+j$.

This shows that condition \eqref{condition:depth} holds. The proof of condition \eqref{condition:depthpmorphism} is similar, but for completeness, we include it here as well. Accordingly, suppose $xR'y$. Then there is a least $i\in\omega$ s.t. $\{x,y\}\subseteq W_i$, and $\textnormal{d}'(x)=\textnormal{d}_l(x)$ for all $l\geq i$. If there is $y'\in W_i$ s.t. $xR_iy', \textnormal{d}_i(y')=\textnormal{d}_i(x)+1,\text{ and }f_i(y')=f_i(y)$, then we have  $xR'y', \textnormal{d}'(y')=\textnormal{d}'(x)+1,\text{ and }f(y')=f(y)$, as required. If not, then as $(x,y)\in W_i^{2}\subseteq (W\cup X)^{2}$, the tuple has been assigned some number $k\in\omega$ in our enumeration. So by construction, at some stage $l\leq i+k+1$, Lemma~\ref{lm:conddepthbis} will have been applied to $(x,y)$, and a point $u$ will have been added so that $xR_{l}u, \textnormal{d}_{l}(u)=\textnormal{d}_{l}(x)+1, \text{ and }f_{l}(u)=f_{l}(y)$. Consequently, also in this case, we have the required, as then $xR'u, \textnormal{d}'(u)=\textnormal{d}'(x)+1, \text{ and }f(u)=f(y)$, which thereby completes the proof of the lemma.
\end{proof}
\subsection{Round Two: The Filtration Lemma}\label{subsec:Fil}
Next and last, we prove the Filtration Lemma (Lemma~\ref{lm:filtration}). For this, we need one final definition.
\begin{definition}
    Let $\mathfrak{M}=(W,R,V)$ and $\mathfrak{M}'=(W',R',V')$ be models. For a set of propositional variables $\mathbf{X}\subseteq \mathsf{Prop}$, we define relations ${\sim^{\mathbf{X}}_k}\subseteq W\times W'$ recursively as follows for $(w,w')\in W\times W'$:
    \begin{align*}
        w\sim^{\mathbf{X}}_0w' \qquad &\text{iff}\qquad \hspace{.2cm}\text{for all $p\in \mathbf{X}$: }w\in V(p)\Leftrightarrow w'\in V'(p)\\
        w\sim^{\mathbf{X}}_{k+1}w' \qquad &\text{iff}\qquad \begin{array}[t]{ll}
        (a) & w\sim^{\mathbf{X}}_0 w', \\[3pt]
        (b) & \text{if $wRv$, then there is $v'\in W'$ with $w'R'v'$ and $v\sim^{\mathbf{X}}_k v'$,} \\[3pt]
        (c) & \text{if $w'R'v'$, then there is $v\in W$ with $wRv$ and $v\sim^{\mathbf{X}}_k v'$.}
      \end{array}
    \end{align*}
    In case $w\sim^{\mathbf{X}}_kw'$, we say that $w$ and $w'$ are \textit{$k$-bisimilar (w.r.t. $\mathbf{X}$)}. When $\mathbf{X}$ is clear from context, we omit the superscript and write $w\sim_kw'$.
\end{definition}
Before proceeding to the proof of the lemma, we record some well-known and readily proven facts regarding $k$-bisimilarity.
\begin{fact}\label{fact:kbis}
     Let $\mathfrak{M}=(W,R,V)$ and $\mathfrak{M}'=(W',R',V')$ be models, and $\mathbf{X}\subseteq \mathsf{Prop}$ a set of propositional variables. Then the following hold for all $k\in\omega$:
     \begin{itemize}
         \item If $w\sim^{\mathbf{X}}_kw'$, then for all formulas $\varphi$  such that $\mathsf{Prop}(\varphi)\subseteq \mathbf{X}$ and $\textnormal{md}(\varphi)\leq k$, 
     \[
        \mathfrak{M},w\Vdash\varphi \qquad\text{iff}\qquad \mathfrak{M}', w'\Vdash \varphi.
     \]
        \item ${\sim}\subseteq{\sim^{\mathbf{X}}_{k+1}}\subseteq {\sim^{\mathbf{X}}_k}$.
        \item If $\mathfrak{M}=\mathfrak{M}'$, then $\sim^{\mathbf{X}}_k$ is an equivalence relation on $W$.
        \item If $\mathfrak{M}=\mathfrak{M}'$ and $\mathbf{X}$ is finite, then the set of $\sim^{\mathbf{X}}_k$-equivalence classes is finite.
     \end{itemize}
\end{fact}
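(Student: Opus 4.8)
The plan is to establish each of the four items separately, in every case by induction on $k$, reducing the inductive step to the recursive clauses $(a)$--$(c)$ in the definition of $\sim^{\mathbf{X}}_{k+1}$. None of the items is deep, so the work is almost entirely bookkeeping; I will flag the one place where a small idea is actually needed.

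For the first item (the Hennessy--Milner-style adequacy statement) I would induct on $k$, and inside the successor step run a subsidiary induction on the structure of $\varphi$. The base case $k=0$ concerns exactly the Boolean combinations of variables in $\mathbf{X}$ together with $\bot$: atoms are handled by clause $(a)$ of $\sim^{\mathbf{X}}_0$, and the Boolean connectives follow immediately. For the step, given $\varphi$ with $\mathsf{Prop}(\varphi)\subseteq\mathbf{X}$ and $\textnormal{md}(\varphi)\leq k+1$, the only case needing the modal clauses is $\varphi=\Diamond\psi$, where necessarily $\textnormal{md}(\psi)\leq k$. A witness $v$ with $wRv$ and $\mathfrak{M},v\Vdash\psi$ is matched by clause $(b)$ to some $v'$ with $w'R'v'$ and $v\sim^{\mathbf{X}}_k v'$; the induction hypothesis on $k$ then transfers $\psi$ to $v'$, giving $\mathfrak{M}',w'\Vdash\Diamond\psi$, and the converse direction uses clause $(c)$.

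For the chain of inclusions I would prove ${\sim}\subseteq{\sim^{\mathbf{X}}_k}$ (for every $k$, which subsumes the stated $k+1$ case) and ${\sim^{\mathbf{X}}_{k+1}}\subseteq{\sim^{\mathbf{X}}_k}$ separately, each by induction on $k$. In both, clause $(a)$ settles the base case, and the step just unpacks clauses $(b)$, $(c)$ at the higher level and feeds the matched successors into the induction hypothesis; for the first inclusion one additionally notes that a full bisimulation $Z$ witnessing $w\sim w'$ also witnesses $v\sim v'$ at each matched pair. The third item is again a routine induction on $k$: $\sim^{\mathbf{X}}_0$ is the kernel of $w\mapsto\{p\in\mathbf{X}\mid w\in V(p)\}$, hence an equivalence relation, and reflexivity, symmetry, and transitivity of $\sim^{\mathbf{X}}_{k+1}$ each follow by applying the corresponding property of $\sim^{\mathbf{X}}_k$ to the successors supplied by $(b)$, $(c)$ (with $v'=v$ for reflexivity, using $\mathfrak{M}=\mathfrak{M}'$).

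The fourth item is the only one requiring a genuine counting idea, and that is where I expect the minor difficulty to sit. I would once more induct on $k$: at level $0$ there are at most $2^{|\mathbf{X}|}$ classes, since the class of $w$ is determined by $\{p\in\mathbf{X}\mid w\in V(p)\}$. For the step, the key observation is that, since $\mathfrak{M}=\mathfrak{M}'$, clauses $(b)$ and $(c)$ together say precisely that $w\sim^{\mathbf{X}}_{k+1}w'$ iff $w\sim^{\mathbf{X}}_0 w'$ and $w,w'$ reach the same set of $\sim^{\mathbf{X}}_k$-classes, i.e. $\{[v]_{k}\mid wRv\}=\{[v']_{k}\mid w'Rv'\}$. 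Hence the map sending a world to the pair consisting of its $\sim^{\mathbf{X}}_0$-class and its set of reachable $\sim^{\mathbf{X}}_k$-classes is constant on, and separates, the $\sim^{\mathbf{X}}_{k+1}$-classes; as the induction hypothesis makes the number $N_k$ of $\sim^{\mathbf{X}}_k$-classes finite, the codomain of this map is finite, bounding the number of $\sim^{\mathbf{X}}_{k+1}$-classes by $2^{|\mathbf{X}|}\cdot 2^{N_k}$. The one thing to get right here is this reformulation of $(b)$ and $(c)$ as equality of reachable-class sets, which is exactly what makes the finiteness count go through.
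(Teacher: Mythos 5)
Your proposal is correct: all four inductions go through as you describe, and your item-four reformulation of clauses $(b)$ and $(c)$ (with $\mathfrak{M}=\mathfrak{M}'$) as equality of the sets of reachable $\sim^{\mathbf{X}}_k$-classes---which quietly relies on item three, so it matters that you establish the equivalence-relation property first, as you do---is exactly the counting idea needed, giving the bound $N_{k+1}\leq 2^{|\mathbf{X}|}\cdot 2^{N_k}$. The paper states Fact~\ref{fact:kbis} without proof, recording it as well-known and readily proven, so there is no authorial argument to diverge from; your write-up is precisely the standard one the paper implicitly invokes.
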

\begin{proof}[Proof of the Filtration Lemma~\ref{lm:filtration}]
    Suppose $\varphi$ is satisfied at a root $r\in W$ of a model $\mathfrak{M}=(W,R, V)$ satisfying \eqref{condition:depthbis} and \eqref{condition:depth} for each $\Diamond^mp\to \Diamond^np\in \Phi$. Letting $\textnormal{d}(x)\mathrel{:=}\textnormal{d}_{r}(x)$ and $k\mathrel{:=}\text{md}(\varphi)$, we extract a finite $\varphi$-satisfying $\mathbf{K}\oplus\Phi $ model through a particular kind of filtration.
    First, we define a binary relation $\equiv$ on $W_{\leq k}\mathrel{:=}\{x\in W\mid \textnormal{d}(x)\leq k\}$ as follows:

    \[
        x\equiv y \qquad \text{iff} \qquad \textnormal{d}(x)=\textnormal{d}(y) \text{ and } x\sim_{k-\textnormal{d}(x)}y,
    \]
    where we have suppressed the superscript $^{\mathsf{Prop}(\varphi)}$ and written $x\sim_{k-\textnormal{d}(x)}y$ rather than $x\sim_{k-\textnormal{d}(x)}^{\mathsf{Prop}(\varphi)}y$.

    Since $x\in W_{\leq k}$ means $\textnormal{d}(x)\leq k$, we have $k-\textnormal{d}(x)\in\omega$ for all $x\in W_{\leq k}$, so $\sim_{k-\textnormal{d}(x)}$ is well-defined. One then easily checks that $\equiv$ is an equivalence relation on $W_{\leq k}$ using that the relations $\sim_{k}$ are equivalence relations, cf. Fact~\ref{fact:kbis}. Then, by the last bullet point of Fact~\ref{fact:kbis} and the fact that the set of propositional variables occurring in $\varphi$ is finite, i.e. $|\mathsf{Prop}(\varphi)|<\aleph_0$, we have that the set of $\equiv$-equivalence classes 
    \[
        W^f\mathrel{:=}\{|x| \mid x\in W_{\leq k}\}
    \]
    is finite, where $|x|$ denotes the $\equiv$-equivalence class of $x$.

    We then define a relation $R^f\subseteq W^f\times W^f$ by setting $|x|R^f|y|$ \text{iff} $\textnormal{d}(x)=k$ or 
    \[
        \textnormal{d}(x)<k,\quad \textnormal{d}(y)\leq \textnormal{d}(x)+1, \quad \text{and}\quad \exists y'(xRy'\text{ and } y'\sim_{k-\textnormal{d}(x)-1}y).
    \]
    Take note that we do \textit{not} require that $y'\in |y|$ (and that $\textnormal{d}(x)<k$ and $xRy'$ jointly imply $y'\in W_{\leq k}$). 
    
    Let us show that $R^f$ is well-defined, in that it doesn't depend on the representatives of the equivalence classes. To see that it doesn't depend on the representative for $|x|$, observe that $x\equiv x'$ implies $\textnormal{d}(x)=\textnormal{d}(x')$ and $x\sim_{k-\textnormal{d}(x)}x'$, so if $xRy'$ and $y'\sim_{k-\textnormal{d}(x)-1}y$, then $x\sim_{k-\textnormal{d}(x)}x'$ implies that there is $y''$ s.t. $x'Ry''$ and $y''\sim_{k-\textnormal{d}(x)-1}y'$, hence also $y''\sim_{k-\textnormal{d}(x)-1}y$. And to see that it doesn't depend on the representative for $|y|$, note that if $y\sim_{k-\textnormal{d}(y)}y''$ for $\textnormal{d}(y)\leq \textnormal{d}(x)+1$, then we also have $y\sim_{k-\textnormal{d}(x)-1}y''$, cf. Fact~\ref{fact:kbis}, hence $y'\sim_{k-\textnormal{d}(x)-1}y$ would imply $y'\sim_{k-\textnormal{d}(x)-1}y''$.
    
    Thus, $(W^f, R^f)$ is a well-defined finite frame. To show that it is a $\mathbf{K}\oplus\Phi $ frame, we will use that the map $w\mapsto |w|$ is an $R$-homomorphism: if $xRy$ then $|x|R^f|y|$, for all $x,y\in W_{\leq k}$. Indeed, if $xRy$, then clearly $\textnormal{d}(y)\leq \textnormal{d}(x)+1$ and $y\sim_{k-\textnormal{d}(x)-1}y$, so we have $|x|R^f|y|$.
    \\\\
    Now let $\Diamond^mp\to \Diamond^np\in \Phi$ be arbitrary and suppose $|x|R^f|y_1|R^f\cdots R^f |y_{m-1}| R^f|z|$. We are to find $|w_1|,\hdots, |w_{n-1}|$ such that $|x|R^f|w_1|R^f\cdots R^f|w_{n-1}|R^f|z|$. First, we observe that 
    \[
        \textnormal{d}(y_i)\leq \textnormal{d}(x)+i\qquad \text{and}\qquad \textnormal{d}(z)\leq \textnormal{d}(x)+m,
    \]
    since $|w|R^f|v|$ only if $\textnormal{d}(v)\leq \textnormal{d}(w)+1$, also when $\textnormal{d}(w)=k$. Then, we continue by cases on $\textnormal{d}(x)$. 

    \textbf{Case 1.} If $\textnormal{d}(x)\leq k-m$, then we claim that there are $y_i'\in W$, for $i\in \{1,\hdots, m\}$, such that
    \[
        xRy_1'Ry_2'R\cdots Ry_m',\qquad \textnormal{d}(y_i')\leq \textnormal{d}(x)+i, \qquad y_i'\sim_{k-\textnormal{d}(x)-i}y_i,
    \]
    where we for convenience have denoted $y_m\mathrel{:=}z$. We argue by induction on $i\in \{1,\hdots, m\}$. 

    For the base case, since $|x|R^f|y_1|$ and $\textnormal{d}(x)\leq k-m<k$, there is $y_1'$ such that $xRy_1'$ and $y_1'\sim_{k-\textnormal{d}(x)-1}y_1$. The former implies $\textnormal{d}(y_1')\leq \textnormal{d}(x)+1$.
    
    For the inductive step, since $|y_j|R^f|y_{j+1}|$ and $\textnormal{d}(y_j)\leq \textnormal{d}(x)+j<k$, we likewise find $y_{j+1}''$ such that $y_jRy_{j+1}''$ and $y_{j+1}''\sim_{k-\textnormal{d}(y_j)-1}y_{j+1}$. Since $\textnormal{d}(y_j)\leq \textnormal{d}(x)+j$, Fact~\ref{fact:kbis} gives us $y_{j+1}''\sim_{k-\textnormal{d}(x)-j-1}y_{j+1}$. Further, by the induction hypothesis, we have $y_j'\sim_{k-\textnormal{d}(x)-j}y_j$, so since $y_jRy_{j+1}''$, there is $y_{j+1}'$ s.t. $y_j'Ry_{j+1}'$ and $y_{j+1}'\sim_{k-\textnormal{d}(x)-j-1}y_{j+1}''$. From the former we get 
    \[
        \textnormal{d}(y_{j+1}')\leq \textnormal{d}(y_{j}')+1\overset{\text{IH}}{\leq}\textnormal{d}(x)+j+1,
    \]
    and from the latter we get $y_{j+1}'\sim_{k-\textnormal{d}(x)-j-1}y_{j+1}$, which completes the induction and establishes the claim. 
    
    By assumption, $(W,R)$ satisfies \eqref{condition:depth}, so $xRy_1'Ry_2'R\cdots Ry_m'$ entails that there are $w_1, \hdots, w_{n-1}\in W$ s.t. 
    \[
        xRw_1R\cdots Rw_{n-1}Ry_m'\qquad \text{and}\qquad  \textnormal{d}(w_{j})=\textnormal{d}(x)+j.
    \]
    Using this, we further split the case according to whether $\textnormal{d}(x)\leq k-n$.
    
    \noindent
\begin{minipage}[t]{0.56\textwidth} 
    \vspace{0pt}
    \setlength{\abovedisplayskip}{0pt} 
\setlength{\abovedisplayshortskip}{0pt}
    If $\textnormal{d}(x)> k-n$, then denoting $w_0\mathrel{:=}x$, there is $w_j$ with $\textnormal{d}(w_j)=k$ for $0\leq j\leq n-1$. So as $w\mapsto |w|$ is a homomorphism, we get a $j$-path $|x|R^f|w_1|R^f\cdots R^f|w_{j}|$, and using that $|w_{j}|R^f|w_{j}|$ and $|w_{j}|R^f|z|$ as $\textnormal{d}(w_j)=k$, we can extend it to an $n$-path ending at $|z|$.

    If $\textnormal{d}(x)\leq k-n$, then 
    \[
        \textnormal{d}(w_{j})=\textnormal{d}(x)+j\leq k-n+j<k,
    \]
    so $w_j\in W_{\leq k}$ for all $j\leq n-1$, hence 
    \[
        |x|R^f|w_1|R^f\cdots R^f|w_{n-1}|.
    \]
    Thus, it will suffice to show that $|w_{n-1}|R^f|z|$. Since $\textnormal{d}(w_{n-1})<k$, we have to show that
\end{minipage}%
  \hspace{0.05\textwidth}
\begin{minipage}[t]{0.39\textwidth} 
\centering
\begin{tikzpicture}[>=stealth, node distance=1.5cm, baseline=(current bounding box.north)]
  
  \node (a2) at (6,0) {$x$};
  \node (c2) at (8,2) {$y_1$};
  \node (c3) at (10,4) {$y_2=z$};
  
  \node (d2) at (6,2) {$y_1'$};
  \node (d3) at (8,4) {$y_2''$};      
  \node (d4) at (6,4) {$y_2'$};    

  \node (y1) at (5,1.33) {$w_1$};
  \node (y2) at (5,2.66) {$w_2$};

  \node[draw, align=center] at (8.8,0.7) {\small $\textnormal{d}(z)\leq \textnormal{d}(w_2)+1$ \\ $y_2'\sim_{k-\textnormal{d}(w_2)-1}z$};

  \node at (7,2) {$\sim$};
    \node at (9,4) {$\sim$};
    \node at (7,4) {$\sim$};

  \draw[->] (a2) -- (d2);
    \draw[->] (c2) -- (d3);
    \draw[->, dashed] (d2) -- (d4);
    );

    \draw[->, dotted] (a2) .. controls (5.2,0.66) .. (y1);
    \draw[->, dotted] (y1) -- (y2);
    \draw[->, dotted] (y2) .. controls (5.2,3.33) .. (d4);
\end{tikzpicture}

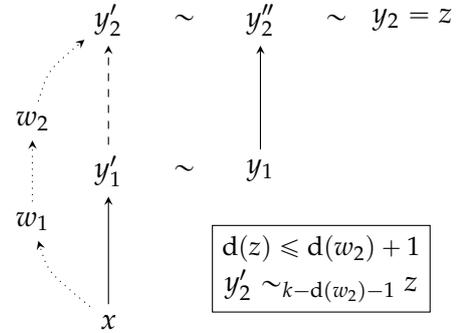
\captionof{figure}{Illustration of the key part of the argument, when $\textnormal{d}(x)\leq k-n$, for $m=2,n=3$, so $|x|R^f|y_1|R^f|z|\Rightarrow \exists |w_1|, |w_2|: |x|R^f|w_1|R^f|w_2|R^f|z|$.}\label{fig:relation}
\end{minipage}
\[
        \textnormal{d}(z)\leq \textnormal{d}(w_{n-1})+1 \qquad \text{and}\qquad \exists z'(w_{n-1}Rz'\text{ and } z'\sim_{k-\textnormal{d}(w_{n-1})-1}z).
\]
    The former holds because
\[
        \textnormal{d}(z)\leq \textnormal{d}(x)+m\leq \textnormal{d}(x)+n=\textnormal{d}(w_{n-1})+1,
\]
    and $y_m'$ witnesses the latter, by Fact~\ref{fact:kbis} combined with 
\[
    y_m'\sim_{k-\textnormal{d}(x)-m}y_m=z \qquad \text{and}\qquad \textnormal{d}(x)+m\leq \textnormal{d}(x)+n =\textnormal{d}(w_{n-1})+1.
\]
This completes the case where $\textnormal{d}(x)\leq k-m$.

\textbf{Case 2.} If $\textnormal{d}(x)>k-m$, we denote $y_0\mathrel{:=}x$ and use that for all $i\in \{0,\hdots, k-\textnormal{d}(x)-1\}$, there is $y_{i+1}'$ s.t. 
\[
    y_iRy_{i+1}'    \qquad \text{and} \qquad y_{i+1}'\sim_{k-\textnormal{d}(x)-i-1}y_{i+1}.
\]
This follows because $\textnormal{d}(y_i)\leq \textnormal{d}(x)+i$ and for $i\leq k-\textnormal{d}(x)-1$, we have $\textnormal{d}(x)+i\leq k-1<k$, so for such $i$, $|y_i|R^f|y_{i+1}|$ implies that there is $y_{i+1}'$ s.t. $y_iRy_{i+1}'$ and $y_{i+1}'\sim_{k-\textnormal{d}(y_i)-1}y_{i+1}$. Lastly, the latter holds only if $y_{i+1}'\sim_{k-\textnormal{d}(x)-i-1}y_{i+1}$, since $\textnormal{d}(y_i)\leq \textnormal{d}(x)+i$.

Strengthening this, we claim that for all $i\in \{0,\hdots, k-\textnormal{d}(x)-1\}$, there is $y_{i+1}''$ s.t. 
\[
        xRy_1''Ry_2''R\cdots Ry_{k-\textnormal{d}(x)}'',\qquad \textnormal{d}(y_{i+1}'')= \textnormal{d}(x)+i+1, \qquad y_{i+1}''\sim_{k-\textnormal{d}(x)-i-1}y_{i+1}.
    \]
Note that in case $\{0,\hdots, k-\textnormal{d}(x)-1\}=\varnothing$, then $k-\textnormal{d}(x)-1<0$, so $\textnormal{d}(x)=k$, hence $|x|R^f|x|$ and $|x|R^f|z|$, and we would be done. Accordingly, assume $\{0,\hdots, k-\textnormal{d}(x)-1\}\neq\varnothing$, i.e. $\textnormal{d}(x)<k$. We proceed proving the claim by induction on $i\in \{0,\hdots, k-\textnormal{d}(x)-1\}$. 

For the base case, since $x=y_0Ry_1'$, we get by condition~\eqref{condition:depthbis} that there is $y_1''$ s.t. $xRy_1'', \textnormal{d}(y_1'')=\textnormal{d}(x)+1,$ and $y_1''\sim y_1'$. Since $y_{1}'\sim_{k-\textnormal{d}(x)-1}y_{1}$, it follows that $y_1'' \sim_{k-\textnormal{d}(x)-1}y_1$.

For the inductive case, since $y_iRy_{i+1}'$ and, by IH, $y_{i}''\sim_{k-\textnormal{d}(x)-i}y_{i}$, there is $y_{i+1}'''$ s.t. $y_{i}''Ry_{i+1}'''$ and $y_{i+1}'''\sim_{k-\textnormal{d}(x)-i-1}y_{i+1}'$. So by condition~\eqref{condition:depthbis}, there is $y_{i+1}''$ s.t.
\[
    y_i''Ry_{i+1}'', \qquad \textnormal{d}(y_{i+1}'')=\textnormal{d}(y_i'')+1\overset{\text{IH}}{=}\textnormal{d}(x)+i+1,\qquad y_{i+1}''\sim y_{i+1}'''.
\]
Thus, as 
\[
    y_{i+1}'''\sim_{k-\textnormal{d}(x)-i-1}y_{i+1}'\sim_{k-\textnormal{d}(x)-i-1}y_{i+1},
\]
we also have $y_{i+1}'' \sim_{k-\textnormal{d}(x)-i-1}y_{i+1}$, as desired.

With the induction completed, it now follows that $|x|R^f|y_1''|R^f|y_2''|R^f\cdots R^f|y_{k-\textnormal{d}(x)}''|$, since $w\mapsto |w|$ is a homomorphism. This path can be extended to an $n$-path ending at $|z|$, as we have $|y_{k-\textnormal{d}(x)}''|R^f|y_{k-\textnormal{d}(x)}''|$ and $|y_{k-\textnormal{d}(x)}''|R^f|z|$ because $\textnormal{d}(y_{k-\textnormal{d}(x)}'')=\textnormal{d}(x)+k-\textnormal{d}(x)=k$.

This completes the case where $\textnormal{d}(x)>k-m$, and we may therefore conclude that $(W^f, R^f)$ is a $\mathbf{K}\oplus\Phi$ frame. 
\\\\    
Finally, we let $\mathfrak{M}^f\mathrel{:=}(W^f,R^f,V^f)$, where $V^f$ is the valuation on $(W^f, R^f)$ defined by setting
\[
    V^f(p)\mathrel{:=}\{|x|\mid x\in V(p)\},
\]
for all $p\in \mathsf{Prop}(\varphi)$. Note that this is well-defined. We proceed to show that for all $x\in W_{\leq k}$ and $\psi\in \operatorname{sub}(\varphi)$ with $\textnormal{md}(\psi)\leq k-\textnormal{d}(x)$: 
    \[
        \mathfrak{M},x\Vdash \psi\qquad \text{iff}\qquad \mathfrak{M}^f, |x|\Vdash \psi.
    \]
    As $\mathfrak{M},r\Vdash \varphi, \textnormal{d}(r)=0, \textnormal{md}(\varphi)=k$, and $\varphi\in \operatorname{sub}(\varphi)$, this would imply $\mathfrak{M}^f, |r|\Vdash \varphi$, thereby completing the proof.

    We prove this claim by an induction on the modal depth of $\psi\in \operatorname{sub}(\varphi)$. For the base case, we prove the claim holds for all $\psi\in \operatorname{sub}(\varphi)$ with $\textnormal{md}(\psi)=0$. Since we, by definition, have 
    \[
        \mathfrak{M},x\Vdash p\qquad \text{iff}\qquad \mathfrak{M}^f, |x|\Vdash p
    \]
    for all $x\in W_{\leq k}$ and all propositional variables $p\in \mathsf{Prop}(\varphi)$, an easy induction establishes this biimplication for all Boolean combinations of propositional variables from $\mathsf{Prop}(\varphi)$, hence for all formulas $\psi\in \operatorname{sub}(\varphi)$ with $\textnormal{md}(\psi)=0$.

    For the inductive step, suppose for some $i\in\{0,\hdots, k-1\}$ that the claim holds for all $\psi\in \operatorname{sub}(\varphi)$ with $\textnormal{md}(\psi)\leq i$. We show it holds for all $\psi\in \operatorname{sub}(\varphi)$ with $\textnormal{md}(\psi)=i+1$. The proof is by structural induction on $\psi$, where, once again, the Boolean cases are straightforward. So suppose $\psi=\Diamond\chi$. Then $\textnormal{md}(\psi)=i+1$ implies $\textnormal{md}(\chi)=i$; and $\psi\in \operatorname{sub}(\varphi)$ implies $\chi\in \operatorname{sub}(\varphi)$. Let $x\in W_{\leq k}$ with $\textnormal{md}(\psi)\leq k-\textnormal{d}(x)$ be arbitrary, and note that $\textnormal{d}(x)\leq k-i-1$. 

    If $\mathfrak{M},x\Vdash \Diamond\chi$, there must be $y\in W$ such that $xRy$ and $\mathfrak{M},y\Vdash \chi$. Since $xRy$ and $\textnormal{d}(x)\leq k-i-1$, we get that $\textnormal{d}(y)\leq k-i$, whence $\textnormal{md}(\chi)=i\leq k-\textnormal{d}(y)$ and $y\in W_{\leq k}$. Hence we can apply the induction hypothesis, yielding
    \[
        \mathfrak{M}^f, |y|\Vdash \chi
    \]
    and thus witnessing 
    \[
        \mathfrak{M}^f, |x|\Vdash \Diamond\chi, 
    \]
    as $w\mapsto |w|$ is a homomorphism, so $xRy$ implies $|x|R^f|y|$, for $x,y\in W_{\leq k}$.

    Conversely, if $\mathfrak{M}^f,|x|\Vdash \Diamond\chi$, then there is $|y|$ such that $|x|R^f|y|$ and $\mathfrak{M}^f,|y|\Vdash \chi$. Observe that since $\textnormal{d}(x)\leq k-i-1$, we have $\textnormal{d}(x)<k$, hence $|x|R^f|y|$ entails that $\textnormal{d}(y)\leq \textnormal{d}(x)+1$ \text{and} there is $y'\in W$ with $xRy'\text{ and } y'\sim_{k-\textnormal{d}(x)-1}y$. So we have 
    \[
        \textnormal{md}(\chi)=i\leq k-\textnormal{d}(x)-1\leq k-\textnormal{d}(y),
    \]
    hence by the induction hypothesis,
    \[
        \mathfrak{M}, y\Vdash \chi.
    \]
    Lastly, because $y'\sim_{k-\textnormal{d}(x)-1}y$ and $\textnormal{md}(\chi)=i\leq k-\textnormal{d}(x)-1$, Fact~\ref{fact:kbis} yields
    \[
        \mathfrak{M}, y'\Vdash \chi,
    \]
    whence 
        \[
        \mathfrak{M}, x\Vdash \Diamond\chi,
    \]
    as required.
\end{proof}







\printbibliography\label{biblio} 

\end{document}